\newcommand\@dotsep{4.5}
\def\@tocline#1#2#3#4#5#6#7{\relax
  \ifnum #1>\c@tocdepth 
  \else
    \par \addpenalty\@secpenalty\addvspace{#2}%
    \begingroup \hyphenpenalty\@M
    \@ifempty{#4}{%
      \@tempdima\csname r@tocindent\number#1\endcsname\relax
    }{%
      \@tempdima#4\relax
    }%
    \parindent\z@ \leftskip#3\relax
    \advance\leftskip\@tempdima\relax
    \rightskip\@pnumwidth plus1em \parfillskip-\@pnumwidth
    #5\leavevmode\hskip-\@tempdima #6\relax
    \leaders\hbox{$\m@th
      \mkern \@dotsep mu\hbox{.}\mkern \@dotsep mu$}\hfill
    \hbox to\@pnumwidth{\@tocpagenum{#7}}\par
    \nobreak
    \endgroup
  \fi}
\let\oldtocsection=\tocsection
\let\oldtocsubsection=\tocsubsection
\renewcommand{\tocsection}[2]{\hspace{0em}\oldtocsection{#1}{#2}}
\renewcommand{\tocsubsection}[2]{\hspace{22pt}\oldtocsubsection{#1}{#2}}
\newcommandx{\yaHelper}[2][1=\empty]{%
\ifthenelse{\equal{#1}{\empty}}%
  { \ensuremath{ \scriptstyle{ #2 } } } 
  { \raisebox{ #1 }[0pt][0pt]{ \ensuremath{ \scriptstyle{ #2 } } } }  
}   
\newcommandx{\yrightarrow}[4][1=\empty, 2=\empty, 4=\empty, usedefault=@]{%
  \ifthenelse{\equal{#2}{\empty}}
  { \xrightarrow{ \protect{ \yaHelper[ #4 ]{ #3 } } } } 
  { \xrightarrow[ \protect{ \yaHelper[ #2 ]{ #1 } } ]{ \protect{ \yaHelper[ #4 ]{ #3 } } } } 
}
\definecolor{darkgreen}{RGB}{0, 153, 51}
\definecolor{violet}{RGB}{112, 73, 170}
\definecolor{darkred}{RGB}{153, 0, 0}
\newcommand{\R}{\mathbb{R}}
\newcommand{\N}{\mathbb{N}}
\newcommand{\Z}{\mathbb{Z}}
\newcommand{\C}{\mathbb{C}}
\newcommand{\e}{\varepsilon}
\newcommand{\n}[1]{\|#1\|}
\newcommand{\nn}[1]{{\vert\kern-0.25ex\vert\kern-0.25ex\vert #1 
    \vert\kern-0.25ex\vert\kern-0.25ex\vert}}
\newcommand{\lnn}[1]{{\left\vert\kern-0.25ex\left\vert\kern-0.25ex\left\vert #1 
    \right\vert\kern-0.25ex\right\vert\kern-0.25ex\right\vert}}
\newcommand{\dist}{\mathrm{dist}}
\renewcommand{\leq}{\leqslant}
\renewcommand{\geq}{\geqslant}
\newcommand{\cs}{\mathrm{C}^\ast}
\newcommand{\Hom}{\mathrm{Hom}}
\newcommand{\usim}{\,\raise.17ex\hbox{$\scriptstyle\mathtt{\sim}$}}
\newtheorem{theorem}{Theorem}
\newtheorem{lemma}[theorem]{Lemma}
\newtheorem{proposition}[theorem]{Proposition}
\newtheorem{corollary}[theorem]{Corollary}
\theoremstyle{definition}
\theoremstyle{remark}
\numberwithin{equation}{section}
\title[Ideal C$^\ast$-completions and amenability]{A note on ideal C$^\ast$-completions and amenability}
\subjclass[2020]{Primary 46L05, Secondary 39B82, 43A07}
\author{Tomasz Kochanek}
\address{Institute of Mathematics, University of Warsaw, Banacha~2, 02-097 Warsaw, Poland}
\email{tkoch@mimuw.edu.pl}
\keywords{Ulam stability, group C$^\ast$-algebra.}
\thanks{This work has been supported by the National Science Centre grant no. 2020/37/B/ST1/01052.}
\begin{document}
\maketitle

\begin{abstract}
For a discrete group $G$, we consider certain ideals $\mathcal{I}\subset c_0(G)$ of sequences with prescribed rate of convergence to zero. We show that the equality between the full group C$^\ast$-algebra of $G$ and the C$^\ast$-completion $\cs_{\mathcal{I}}(G)$ in the sense of Brown and Guentner \cite{brown_guentner} implies that $G$ is amenable.
\end{abstract}
\maketitle

\section{Introduction} 
By a classical result of Hulanicki \cite{hulanicki}, amenable groups can be characterized by the fact that their full and reduced group C$^\ast$-algebras coincide. In \cite{brown_guentner}, Brown and Guentner obtained several far reaching generalizations of this fact by introducing a~new C$^\ast$-completion of any discrete group $G$ induced by an~algebraic ideal $\mathcal{D}$ of $\ell_\infty(G)$. Namely, the corresponding group C$^\ast$-algebra, denoted by $\cs_{\mathcal{D}}(G)$, is the completion of the group ring $\C[G]$ with respect to the norm 
$$
\n{x}_{\mathcal{D}}=\sup\big\{\n{\pi(x)}\colon \pi\mbox{ is a }\mathcal{D}\mbox{-representation}\big\},
$$
where by a $\mathcal{D}$-{\it representation} we mean a~unitary representation $\pi$ of $G$ on a~Hilbert space $H$ such that the matrix coefficient functions $\pi_{\xi,\eta}$ belong to $\mathcal{D}$ for all $\xi,\eta$ from a~dense subspace of $H$. Using this idea, Brown and Guentner provided new C$^\ast$-algebraic characterizations of a-T-menability and Kazhdan's property (T) and, among other things, they showed that the equality $\cs_{\ell^p}(G)=\cs(G)$ is equivalent to $G$ being amenable.

In this note, we consider the ideals of $c_0(G)$ consisting of sequences with prescribed rate of convergence. Namely, for $f\in c_0(G)$ and $\e>0$, we set
$$
\nu(f,\e)=\#\bigl\{s\in G\colon\abs{f(s)}\geq\e\bigr\},
$$
and define
$$
\mathcal{I}_{(a_n)}=\big\{f\in c_0(G)\colon\nu(f,\tfrac{1}{n})=O(a_n)\big\}.
$$
We show that the condition
$$
\cs_{\mathcal{I}_{(a_n)}}(G)=\cs(G)\leqno(*)
$$
is equivalent to (or implies) amenability, provided that $(a_n)$ does not grow too fast.

Amenability is strictly connected to the famous and widely studied stability property arising from a~problem posed by Ulam \cite{ulam} whether any quasimorphism can be uniformly approximated by a~homomorphism, the problem first solved for commutative groups by Hyers \cite{hyers}. We say that a~group $G$ has the {\it Hyers--Ulam property} provided that for every map $\phi\colon G\to\R$ satisfying $$\sup\bigl\{\abs{\phi(xy)-\phi(x)-\phi(y)}\colon x,y\in G\bigr\}<\infty$$ we have $\dist(\phi,\Hom(G,\R))<\infty$. It is known (see \cite{szekelyhidi}) that every amenable group has the Hyers--Ulam property, but the converse is not true which is witnessed e.g. by the groups $\mathrm{SL}(n,\Z)$ for $n\geq 3$. Although there is an~algebraic characterization of the Hyers--Ulam property, due to Bavard \cite{bavard}, no C$^\ast$-algebraic characterization is known.

Hence, a natural question concerning Ulam stability reads as follows: Is there an increasing sequence $(a_n)\subset\R_+$ such that for any discrete group $G$ the following characterization holds true: $G$ has the Hyers--Ulam property if and only if condition ($\ast$) holds true? Our result reduces the size of the set of possible candidates for $(a_n)$.

\section{Results}
In what follows, $G$ stands for a general discrete group. We will need the following two results proved by Brown and Guentner.
\begin{proposition}[{see \cite[Remark~2.5]{brown_guentner}}]\label{BG_P}
For any ideal $\mathcal{D}\subset \ell^\infty(G)$, $\cs_{\mathcal{D}}(G)$ has a~faithful $\mathcal{D}$-representation.
\end{proposition}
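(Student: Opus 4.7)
The plan is to run the familiar universal-representation construction: form a direct sum of enough cyclic $\mathcal{D}$-representations to realize the supremum in the definition of $\|\cdot\|_{\mathcal{D}}$, and then verify that the resulting direct sum is still itself a $\mathcal{D}$-representation.

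First I would handle the set-theoretic issue. Each cyclic $\mathcal{D}$-representation $(\sigma,H_\sigma,\xi_\sigma)$ is determined up to unitary equivalence by the positive-definite function $g\mapsto\langle\sigma(g)\xi_\sigma,\xi_\sigma\rangle\in\ell^\infty(G)$, so picking one representative from each equivalence class produces an honest set $\mathcal{S}$ of cyclic $\mathcal{D}$-representations. Then I would set $\pi:=\bigoplus_{\sigma\in\mathcal{S}}\sigma$ on $H:=\bigoplus_{\sigma\in\mathcal{S}}H_\sigma$. For each $\sigma$ fix a dense witness subspace $V_\sigma\subseteq H_\sigma$ with $\sigma_{\xi,\eta}\in\mathcal{D}$ for all $\xi,\eta\in V_\sigma$, and let $V\subseteq H$ be the algebraic direct sum of the $V_\sigma$, which is dense in $H$. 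For $\xi=(\xi_\sigma),\eta=(\eta_\sigma)\in V$ the support is finite, so
$$\pi_{\xi,\eta}(g)=\sum_{\sigma}\langle\sigma(g)\xi_\sigma,\eta_\sigma\rangle$$
is a finite sum of elements of $\mathcal{D}$, and hence lies in $\mathcal{D}$; thus $\pi$ is a $\mathcal{D}$-representation.

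For faithfulness I would invoke the cyclic decomposition. Any $\mathcal{D}$-representation $\tau$ on $K$ with dense witness subspace $W\subseteq K$ splits as an orthogonal sum of cyclic subrepresentations with cyclic vectors drawn from $W$. The matrix coefficients of such a cyclic summand $\tau_0$ with cyclic vector $\xi_0\in W$, evaluated on vectors $\tau(a)\xi_0,\tau(b)\xi_0$ with $a,b\in\C[G]$, expand as finite linear combinations of two-sided translates of the function $g\mapsto\langle\tau(g)\xi_0,\xi_0\rangle$. Provided $\mathcal{D}$ is invariant under the natural left/right $G$-action, these lie in $\mathcal{D}$; hence each cyclic summand is equivalent to some $\sigma\in\mathcal{S}$ and embeds into $\pi$, giving $\|\tau(x)\|\leq\|\pi(x)\|$ for every $x\in\C[G]$. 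Taking the supremum over $\tau$ yields $\|x\|_{\mathcal{D}}\leq\|\pi(x)\|$, while the reverse inequality is automatic since $\pi$ is itself a $\mathcal{D}$-representation.

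The main obstacle is the last step: showing that cyclic subrepresentations of a $\mathcal{D}$-representation are again $\mathcal{D}$-representations, which rests on the two-sided translation invariance of $\mathcal{D}$. All ideals of interest here (the $\ell^p$-ideals of Brown--Guentner and the ideals $\mathcal{I}_{(a_n)}$ considered below) carry this invariance automatically, so this step, while structurally the heart of the argument, goes through routinely.
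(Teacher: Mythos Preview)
The paper does not supply its own proof of this proposition; it is quoted from Brown--Guentner with only a citation, so there is no argument in the paper to compare yours against.

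Your construction is the standard one and is correct as far as it goes, and you have already put your finger on the one genuine soft spot: the cyclic-decomposition step in your faithfulness argument needs two-sided translation invariance of $\mathcal{D}$, whereas the proposition is stated for an \emph{arbitrary} ideal. For the applications in this paper that is harmless, since every $\mathcal{I}_{(a_n)}$ is translation invariant. If you want the full generality, replace the cyclic decomposition by a L\"owenheim--Skolem type reduction: given a $\mathcal{D}$-representation $(\tau,K)$ with dense witness subspace $W$ and vectors $\xi,\eta\in W$ witnessing $\|\tau(x)\|$ up to $\varepsilon$, use only the density of $W$ in $K$ to build an increasing chain $W_0\subset W_1\subset\cdots\subset W$, each of cardinality at most $\max(|G|,\aleph_0)$, so that $\tau(\C[G])W_n\subset\overline{W_{n+1}}$. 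Then $K_0=\overline{\bigcup_n W_n}$ is $\tau$-invariant, $\bigcup_n W_n\subset W$ is a dense witness subspace for $\tau|_{K_0}$, and $\dim K_0\le\max(|G|,\aleph_0)$. This bounds the Hilbert-space dimension needed to approximate each $\|\tau(x)\|$, so the supremum defining $\|\cdot\|_{\mathcal{D}}$ is already realized over a \emph{set} of $\mathcal{D}$-representations; the direct sum of those (your Step~2, which is fine) then gives the faithful $\mathcal{D}$-representation without invoking translation invariance at all.
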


\begin{theorem}[{\cite[Thm.~3.2]{brown_guentner}}]\label{BG_T}
Let $\mathcal{D}\subset\ell^\infty(G)$ be a~translation-invariant ideal. Then, we have $\cs_{\mathcal{D}}(G)=\cs(G)$ if and only if there exists a~sequence $(h_n)\subset\mathcal{D}$ of positive-definite functions converging pointwise to the constant one function.
\end{theorem}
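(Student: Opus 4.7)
The plan is to prove both directions separately. The ``if'' direction will use a tensor-product trick to absorb the approximating $h_n$ into an arbitrary unitary representation, while the ``only if'' direction will combine Proposition~\ref{BG_P} with a weak-containment argument.

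For the ``if'' direction, let $(\pi_n,H_n,\xi_n)$ denote the GNS triple associated to the positive-definite function $h_n\in\mathcal{D}$, so that $h_n(s)=\ip{\pi_n(s)\xi_n}{\xi_n}$. Given any unitary representation $\pi$ of $G$ on $H$, I would form the tensor product $\pi\otimes\pi_n$ on $H\otimes H_n$. On the dense subspace spanned by vectors of the form $\xi\otimes\pi_n(t)\xi_n$ with $\xi\in H$, $t\in G$, a direct computation gives
\[
\ip{(\pi\otimes\pi_n)(s)\bigl(\xi\otimes\pi_n(t)\xi_n\bigr)}{\eta\otimes\pi_n(t')\xi_n}=\ip{\pi(s)\xi}{\eta}\cdot h_n(t'^{-1}st),
\]
i.e.\ a bounded function times a translate of $h_n$. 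By translation-invariance and the ideal property of $\mathcal{D}$, this product lies in $\mathcal{D}$, so $\pi\otimes\pi_n$ qualifies as a $\mathcal{D}$-representation. Pairing $(\pi\otimes\pi_n)(x)$ with $\xi\otimes\xi_n$ and $\eta\otimes\xi_n$ for $x=\sum_s c_s s\in\C[G]$ yields $\sum_s c_s\ip{\pi(s)\xi}{\eta}h_n(s)\to\ip{\pi(x)\xi}{\eta}$ as $n\to\infty$, since the sum is finite and $h_n\to 1$ pointwise. Hence $\liminf_n\n{(\pi\otimes\pi_n)(x)}\geq\n{\pi(x)}$, giving $\n{x}_{\mathcal{D}}\geq\n{\pi(x)}$; taking supremum over $\pi$ delivers $\n{x}_{\mathcal{D}}\geq\n{x}_{\cs(G)}$, and the reverse inequality is automatic.

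For the ``only if'' direction, Proposition~\ref{BG_P} supplies a faithful $\mathcal{D}$-representation $\pi$ of $\cs_{\mathcal{D}}(G)=\cs(G)$ on some Hilbert space $H$, together with a dense subspace $V\subset H$ on which matrix coefficients belong to $\mathcal{D}$. Faithfulness of $\pi$ on $\cs(G)$ implies that every unitary representation of $G$, in particular the trivial representation $1_G$, is weakly contained in $\pi$. By Fell's criterion, $H$ contains almost-invariant unit vectors for any prescribed finite $F\subset G$ and tolerance $\e>0$. Perturbing such a vector slightly to land inside $V$ and renormalizing produces unit vectors $v_n\in V$ for which $h_n(s):=\ip{\pi(s)v_n}{v_n}$ is positive definite, lies in $\mathcal{D}$, and converges pointwise to $1$.

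The main technical point is verifying that $\pi\otimes\pi_n$ is genuinely a $\mathcal{D}$-representation: one must identify the correct dense subspace on which matrix coefficients split as displayed above and then exploit both the translation-invariance and ideal structure of $\mathcal{D}$. The remaining ingredients — convergence of the matrix coefficients, weak containment of the trivial representation in any faithful representation of $\cs(G)$, and the approximation of almost-invariant vectors into $V$ — are routine once this step is in place.
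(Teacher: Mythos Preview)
The paper does not contain a proof of this theorem; it is quoted verbatim from Brown and Guentner \cite[Thm.~3.2]{brown_guentner} and only used as a black box in the proof of Theorem~\ref{thm1}(b). So there is nothing in the present paper to compare your argument against.

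That said, your proposal is essentially the standard argument (and indeed the one Brown and Guentner give). Both directions are sound. In the ``if'' direction your tensor-product absorption trick is the right idea; just note that the matrix coefficient you display involves a \emph{two-sided} translate $s\mapsto h_n(t'^{-1}st)$, so you are implicitly using that ``translation-invariant'' means invariant under both left and right translations, and that the dense subspace is the \emph{linear span} of the elementary tensors, so the relevant coefficients are finite sums of such products---still in $\mathcal{D}$ since an ideal is a linear subspace. In the ``only if'' direction, the chain ``faithful on $\cs(G)$ $\Rightarrow$ weakly contains $1_G$ $\Rightarrow$ almost-invariant vectors $\Rightarrow$ perturb into $V$'' is exactly the intended route, and the perturbation step works because $v\mapsto\ip{\pi(\cdot)v}{v}$ is norm-continuous from $H$ into $\ell^\infty(G)$.
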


Our main result reads as follows.

\begin{theorem}\label{thm1}
{\bf (a) }If $(a_n)=O(n^{2+\delta})$ for every $\delta>0$, then 
$$
\cs_{\mathcal{I}_{(a_n)}}(G)=\cs_r(G)
$$
and hence condition {\rm ($\ast$)} is equivalent to $G$ being amenable.

\vspace*{2mm}\noindent
{\bf (b) }Suppose a sequence $(a_n)\subset\R_+$ is such that for some $k>0$, we have
$$
\sum_{n=1}^\infty\frac{a_n}{n^k}<\infty .\leqno(**)
$$

\vspace*{1mm}\noindent
Then, condition {\rm ($\ast$)} implies that $G$ is amenable.
\end{theorem}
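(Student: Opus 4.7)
The strategy for both parts is to combine a decreasing-rearrangement / layer-cake estimate embedding $\mathcal{I}_{(a_n)}$ into an appropriate $\ell^p(G)$ with the Brown--Guentner identification $\cs_{\ell^2}(G)=\cs_r(G)$.

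For part (a), the inclusion $\cs_r(G)\hookrightarrow\cs_{\mathcal{I}_{(a_n)}}(G)$ is automatic: on the dense subspace of finitely supported vectors of $\ell^2(G)$ the matrix coefficients of $\lambda$ have finite support and therefore lie in any translation-invariant ideal. For the reverse inclusion I would show that every $\mathcal{I}_{(a_n)}$-representation $\pi$ is weakly contained in $\lambda$. A direct rearrangement argument using $a_n=O(n^{2+\delta})$ for each $\delta>0$ yields $\mathcal{I}_{(a_n)}\subset\bigcap_{p>2}\ell^p(G)$; in particular $\mathcal{I}_{(a_n)}\subset\ell^4(G)$. Fix a dense subspace $V\subset H_\pi$ with $\pi_{\xi,\eta}\in\mathcal{I}_{(a_n)}$ for $\xi,\eta\in V$, and consider $\pi\otimes\bar\pi$ acting on $H_\pi\otimes\bar H_\pi$. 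At simple tensors,
$$
(\pi\otimes\bar\pi)_{\xi_1\otimes\bar\eta_1,\,\xi_2\otimes\bar\eta_2} \;=\; \pi_{\xi_1,\xi_2}\cdot\overline{\pi_{\eta_1,\eta_2}},
$$
a pointwise product of two $\ell^4$-functions, hence in $\ell^2(G)$ by Cauchy--Schwarz. Thus $\pi\otimes\bar\pi$ is an $\ell^2$-representation and so $\pi\otimes\bar\pi\prec\lambda$. Tensoring with $\pi$ and invoking Fell's absorption $\lambda\otimes\pi\cong\lambda^{\oplus\dim\pi}$ gives $\pi\otimes\bar\pi\otimes\pi\prec\lambda$. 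Finally the $G$-equivariant contraction $\xi\otimes\bar\eta\otimes\zeta\mapsto\langle\xi,\eta\rangle\zeta$ realizes $\pi$ as a subrepresentation of $\pi\otimes\bar\pi\otimes\pi$, whence $\pi\prec\lambda$.

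For part (b), I would apply Theorem~\ref{BG_T} to condition $(\ast)$ to obtain a sequence $(h_n)\subset\mathcal{I}_{(a_n)}$ of positive-definite functions with $h_n\to 1$ pointwise. A layer-cake estimate then shows that for every $f\in\mathcal{I}_{(a_n)}$,
$$
\sum_{s\in G}|f(s)|^{k-1} \;\lesssim\; \sum_{n=1}^\infty\frac{a_n}{n^k} \;<\;\infty,
$$
so $h_n\in\ell^{k-1}(G)$. Put $N=\lceil(k-1)/2\rceil$ (taking $N=1$ if $k\leq 3$). Then $h_n^N$ is positive definite by Schur's product theorem, bounded, and belongs to $\ell^{(k-1)/N}(G)\cap\ell^\infty(G)\subset\ell^2(G)$; moreover $h_n^N(g)\to 1$ for every $g\in G$. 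Hence the constant function $1$ is a pointwise limit of positive-definite functions in $\ell^2(G)$, which by Theorem~\ref{BG_T} applied to $\mathcal{D}=\ell^2$ combined with $\cs_{\ell^2}(G)=\cs_r(G)$ forces $\cs_r(G)=\cs(G)$. Hulanicki's theorem then gives amenability of $G$.

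The principal subtlety in (a) is the tensor-contraction chain $\pi\prec\pi\otimes\bar\pi\otimes\pi\prec\lambda$, which requires that weak containment is preserved by tensoring with $\lambda$ (Fell absorption) and by passing to subrepresentations; in (b) the key quantitative step is the sharp layer-cake embedding $\mathcal{I}_{(a_n)}\subset\ell^{k-1}(G)$, after which raising to a suitable pointwise power carries us into $\ell^2$.
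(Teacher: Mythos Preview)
Your argument for part~(b) is correct and close in spirit to the paper's: both extract positive-definite $h_n\in\mathcal{I}_{(a_n)}$ via Theorem~\ref{BG_T}, push them into some $\ell^p$ by a layer-cake estimate, and then take pointwise powers to land in $\ell^2$ (you) or $\ell^1$ (the paper), which forces $\cs_r(G)=\cs(G)$.

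Part~(a), however, has a genuine gap at the contraction step. The map
\[
\xi\otimes\bar\eta\otimes\zeta\ \longmapsto\ \langle\xi,\eta\rangle\,\zeta
\]
is $G$-equivariant on the algebraic tensor product, but it is \emph{not bounded} on the Hilbert-space tensor product when $\dim H_\pi=\infty$: for an orthonormal system $e_1,\dots,e_N$ and a unit vector $v$, the element $x=N^{-1/2}\sum_{i=1}^N e_i\otimes\bar e_i\otimes v$ has $\|x\|=1$ but image $\sqrt{N}\,v$. Hence it does not realize $\pi$ as a subrepresentation of $\pi\otimes\bar\pi\otimes\pi$, and in fact the weak containment $\pi\prec\pi\otimes\bar\pi\otimes\pi$ would need $1\prec\bar\pi\otimes\pi$ (Bekka-amenability of $\pi$), which is not automatic. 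Worse, your chain only uses $\mathcal{I}_{(a_n)}\subset\ell^4(G)$; if it were valid it would prove $\cs_{\ell^4}(G)=\cs_r(G)$ for every discrete $G$, contradicting Okayasu's theorem that the algebras $\cs_{\ell^p}(F_2)$ are pairwise distinct for $p\in[2,\infty]$.

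The paper bypasses this by invoking the Cowling--Haagerup--Howe theorem: for a cyclic representation whose diagonal matrix coefficient lies in $\bigcap_{\varepsilon>0}\ell^{2+\varepsilon}(G)$ one gets $\pi\prec\lambda$ directly. This genuinely uses the full ``$2+\varepsilon$ for all $\varepsilon$'' hypothesis, which is exactly what the growth condition $(a_n)=O(n^{2+\delta})$ provides, and which your $\ell^4$ shortcut discards.
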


\begin{lemma}\label{lem}
Let $f\in c_0(G)$ and $p\geq 1$. We have $f\in \ell^{p}(G)$ if and only if the series 
\begin{equation}\label{series}
\sum_{n=1}^\infty\nu(f,\tfrac{1}{n})n^{-(p+1)}
\end{equation}
converges.
\end{lemma}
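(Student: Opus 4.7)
My plan is to apply the layer cake formula to express $\n{f}_p^p$ as an integral of the distribution function $t\mapsto\nu(f,t)$, and then to show that convergence of this integral is equivalent to convergence of the series \eqref{series}.

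For the first step, Fubini applied to the pointwise identity $\abs{f(s)}^p=p\int_0^{\abs{f(s)}}t^{p-1}\,dt$ gives
\[
\sum_{s\in G}\abs{f(s)}^p \;=\; p\int_0^\infty t^{p-1}\nu(f,t)\,dt,
\]
where the distinction between $\nu(f,t)=\#\{\abs{f}\geq t\}$ and the strict analogue is absorbed under the integral. Because $f\in c_0(G)$, the function $\nu(f,\cdot)$ is finite, non-increasing, and vanishes past $\n{f}_\infty$; in particular the tail $\int_1^\infty t^{p-1}\nu(f,t)\,dt$ is automatically finite. The problem therefore reduces to deciding when $\int_0^1 t^{p-1}\nu(f,t)\,dt$ is finite.

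For the second step, I would partition $(0,1]=\bigsqcup_{n\geq 1}(1/(n+1),1/n]$, with each piece of length $1/(n(n+1))$. On such a subinterval, monotonicity of $\nu(f,\cdot)$ together with monotonicity of $t\mapsto t^{p-1}$ (available since $p\geq 1$) sandwiches the local contribution:
\[
\frac{\nu(f,1/n)}{n(n+1)^p}\;\leq\;\int_{1/(n+1)}^{1/n} t^{p-1}\nu(f,t)\,dt\;\leq\;\frac{\nu(f,1/(n+1))}{n^p(n+1)}.
\]
Summing in $n$, both extremes are of order $\nu(f,1/k)/k^{p+1}$, so the integral $\int_0^1 t^{p-1}\nu(f,t)\,dt$ and the series \eqref{series} converge simultaneously, which together with the first step yields the claim.

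I do not anticipate a genuine obstacle: the argument is a standard layer-cake comparison between a Riemann-type integral and its sampled sum. The only minor bookkeeping is that the upper bound features $\nu(f,1/(n+1))$ rather than $\nu(f,1/n)$, but this shift does not affect convergence since the ratio $(n+1)^{p+1}/n^{p+1}$ stays between $1$ and $2^{p+1}$, so $\sum_n\nu(f,1/(n+1))/n^{p+1}$ and $\sum_n\nu(f,1/n)/n^{p+1}$ are comparable term by term.
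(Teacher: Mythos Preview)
Your argument is correct. The paper takes a different, purely discrete route: it decomposes $G$ into the annuli $\Gamma_n=\{s:\tfrac1n\le|f(s)|<\tfrac1{n-1}\}$, bounds $\sum_s|f(s)|^p$ above and below by series of the form $\sum_n\bigl(\nu(f,\tfrac1{n+1})-\nu(f,\tfrac1n)\bigr)\,n^{-p}$, and then applies Abel summation together with the mean value estimate $n^{-p}-(n+1)^{-p}\asymp n^{-(p+1)}$ to convert these into the series \eqref{series}. Your approach replaces this discrete summation-by-parts step with the layer-cake integral and then discretizes the $t$-axis; this is arguably cleaner and makes the role of monotonicity more transparent, whereas the paper's argument avoids any appeal to Fubini/Tonelli and stays entirely within elementary series manipulations. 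Both routes are standard and of comparable difficulty.
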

\begin{proof}
Let $\Gamma_n=\{s\in G\colon \tfrac{1}{n}\leq\abs{f(s)}<\tfrac{1}{n-1}\}$ for $n\in\N$ with the convention $\tfrac{1}{0}=\infty$, and note that 
$$
\sum_{s\in G}\abs{f(s)}^p=\sum_{n=1}^\infty\sum_{s\in\Gamma_n}\abs{f(s)}^p.
$$
Since $\abs{\Gamma_1}=\nu(f,1)$ and $\abs{\Gamma_n}=\nu(f,\tfrac{1}{n})-\nu(f,\tfrac{1}{n-1})$ for $n\geq 2$, we have
\begin{equation}\label{est1}
\sum_{s\in G}\abs{f(s)}^p\leq \nu(f,1)\cdot \n{f}_\infty+\sum_{n=1}^\infty \big(\nu(f,\tfrac{1}{n+1})-\nu(f,\tfrac{1}{n})\big)\cdot n^{-p}
\end{equation}
and
\begin{equation}\label{est2}
\sum_{s\in G}\abs{f(s)}^p\geq \sum_{n=1}^\infty \big(\nu(f,\tfrac{1}{n+1})-\nu(f,\tfrac{1}{n})\big)\cdot (n+1)^{-p}.
\end{equation}
Denote $d_n=\nu(f,\tfrac{1}{n+1})-\nu(f,\tfrac{1}{n})$; the series occurring in \eqref{est1} is the limit of partial sums
$$
\lim_{N\to\infty}\sum_{n=1}^N d_nn^{-p}=\lim_{N\to\infty}\Bigg[\sum_{n=1}^{N-1}(d_1+\ldots+d_n)\big(n^{-p}-(n+1)^{-p}\big)+(d_1+\ldots+d_N)\cdot N^{-p}\Bigg].
$$
Since $d_1+\ldots+d_N=\nu(f,\tfrac{1}{N+1})-\nu(f,1)$, the above limit exists if and only if the series 
$$
\sum_{n=1}^\infty \nu(f,\tfrac{1}{n+1})(n^{-p}-(n+1)^{-p})
$$
converges. By Lagrange's mean value theorem, we have $n^{-p}-(n+1)^{-p}=p\,\theta_n^{-(p+1)}$ for some $\theta_n\in (n,n+1)$, hence the above series converges if and only if \eqref{series} converges. 

We have proved that the convergence of series \eqref{series} implies that $f\in\ell^p(G)$. The converse implication is proved in a~similar fashion by using estimate \eqref{est2} instead of \eqref{est1}.
\end{proof}

\begin{proof}[Proof of Theorem~\ref{thm1}]
{\bf (a) }Suppose that $(a_n)=O(n^{2+\delta})$ for each $\delta>0$. Then for any $f\in \mathcal{I}_{(a_n)}$ and any $\delta>0$ there is $C_\delta>0$ such that 
$$
\nu(f,\tfrac{1}{n}) n^{-(p+1)}\leq C_\delta\cdot n^{-p+1+\delta}\quad (n\in\N).
$$
Therefore, series \eqref{series} converges for every $p>2$ and hence Lemma~\ref{lem} implies that 
\begin{equation}\label{iinc}
\mathcal{I}_{(a_n)}\subseteq \bigcap_{\e>0}\ell^{2+\e}(G).
\end{equation}

By the Cowling--Haagerup--Howe theorem \cite{CHH},  if $\pi\colon G\to\mathcal{B}(H)$ is a~unitary representation of $G$ with a~cyclic vector $v\in H$ such that $\pi_{v,v}\in\bigcap_{\e>0}\ell^{2+\e}(G)$, then $\pi$ is weakly contained in the regular representation $\lambda$, i.e. $\n{\pi(x)}\leq \n{\lambda(x)}$ for each $x\in G$.

\vspace*{1mm}
Now, for any fixed $x\in\cs_{\mathcal{I}_{(a_n)}}(G)$ we use Proposition~\ref{BG_P} to \vspace*{-1pt}pick a~cyclic $\mathcal{I}_{(a_n)}$-representation $\pi$ with $\pi(x)\neq 0$ (the restriction of a~faithful $\mathcal{I}_{(a_n)}$-representation to a~cyclic subspace). Then, as explained above, inclusion \eqref{iinc} implies that $\pi$ is weakly contained in the regular representation. Therefore, $x$ is not in the kernel of the canonical \vspace*{-1pt}map $\cs_{\mathcal{I}_{(a_n)}}(G)\to \cs_r(G)$, which proves that $\cs_{\mathcal{I}_{(a_n)}}(G)=\cs_r(G)$.

\vspace*{2mm}\noindent
{\bf (b) }This is essentially \cite[Remark~2.13]{brown_guentner} by Brown and Guentner. Notice that condition ($\ast\ast$) says that for any $f\in \mathcal{I}_{(a_n)}$ we have $f^k\in \ell^1(G)$. Indeed, let $C>0$ be such that $\nu(f,\tfrac{1}{n})\leq Ca_n$. Then, the inequality $\abs{f(x)}^k\geq n^{-k}$ holds true for at most $Ca_n$ elements $x\in G$, hence $\n{f^k}_1\leq \nu (f,1)\cdot \n{f}_\infty+C\sum_{n\geq 2}a_n n^{-k}<\infty$.

\vspace*{1mm}
Now, by Theorem~\ref{BG_T}, condition ($\ast$) implies that there exists a sequence $(h_n)\subset \mathcal{I}_{(a_n)}$ of positive-definite functions converging pointwise to the constant one function. In view of ($\ast\ast$), we have $(h_n^k)\subset\ell^1(G)$; if $f_n\subset c_{00}(G)$ approximates the square root of $h_n^k$ in $\cs_r(G)$, then $h_n^k$ is approximated by the finitely supported positive-definite functions $f_n^\ast f_n$. This yields $\cs_r(G)=\cs(G)$, i.e. $G$ is amenable.
\end{proof}

We conclude our note with a corollary which shows that if there is any ideal of the form $\mathcal{I}_{(a_n)}$ characterizing the Hyers--Ulam property for discrete groups, then $(a_n)$ must grow quite rapidly. This follows from Theorem~\ref{thm1} and the fact that the Hyers--Ulam property is weaker than amenability.

\begin{corollary}
If there exists a sequence $(a_n)\subset\R_+$ such that condition {\rm ($\ast$)} characterizes the Hyers--Ulam property, then $(a_n)$ grows faster than any polynomial.
\end{corollary}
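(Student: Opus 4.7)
The plan is to prove the contrapositive. Assume that $(a_n)\subset\R_+$ does not grow faster than every polynomial, i.e., there exist $m\in\N$ and $C>0$ with $a_n\leq Cn^m$ for all $n$. I would show that, under this assumption, condition ($\ast$) already implies amenability of $G$, so it cannot characterize the strictly weaker Hyers--Ulam property.

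First, I would verify that $(a_n)$ satisfies hypothesis ($\ast\ast$) of Theorem~\ref{thm1}(b). Choosing $k=m+2$, the bound $a_n\leq Cn^m$ yields $a_n/n^k\leq C/n^2$, so the series $\sum_n a_n/n^k$ converges by comparison with the Basel-type series. Hence Theorem~\ref{thm1}(b) applies and gives that, for every discrete group $G$, condition ($\ast$) forces $G$ to be amenable.

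Next, I would invoke the assumption that ($\ast$) characterizes the Hyers--Ulam property. Combined with the previous step, this means every group with the Hyers--Ulam property is amenable. This contradicts the fact recalled in the introduction that $\mathrm{SL}(n,\Z)$ for $n\geq 3$ has the Hyers--Ulam property while failing to be amenable, completing the contrapositive.

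There is essentially no substantial obstacle: the corollary is a direct combination of Theorem~\ref{thm1}(b) with the known example of $\mathrm{SL}(n,\Z)$. The only point requiring a brief comment is the interpretation of the phrase \emph{grows faster than any polynomial}, which I read in the natural way as \emph{$a_n$ is not $O(n^m)$ for any fixed $m\in\N$}, so that its negation provides precisely the polynomial bound needed to trigger Theorem~\ref{thm1}(b).
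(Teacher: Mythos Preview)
Your argument is correct and is exactly the paper's intended proof: the paper simply states that the corollary follows from Theorem~\ref{thm1}(b) together with the fact that the Hyers--Ulam property is strictly weaker than amenability (witnessed by $\mathrm{SL}(n,\Z)$ for $n\geq 3$), and you have spelled out precisely those details.
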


\vspace*{2mm}\noindent
{\bf Acknowledgement. }I acknowledge with gratitude the support from the National Science Centre, grant OPUS 19, project no.~2020/37/B/ST1/01052.

\bibliographystyle{amsplain}

\end{document}